\documentclass[12pt,reqno]{amsart}

\usepackage{amssymb}
\usepackage[all]{xy}

\usepackage{hyperref}

\numberwithin{equation}{section}

\newtheorem{theorem}{Theorem}[section]

\theoremstyle{definition}
\newtheorem{definition}[theorem]{Definition}
\newtheorem{remark}[theorem]{Remark}

\begin{document}

\baselineskip=15pt

\title[Holomorphic Lie algebroid connections in higher dimensions]{Existence of holomorphic Lie algebroid 
connections in higher dimensions}

\author[I. Biswas]{Indranil Biswas}

\address{Department of Mathematics, Shiv Nadar University, NH91, Tehsil Dadri,
Greater Noida, Uttar Pradesh 201314, India}

\email{indranil.biswas@snu.edu.in, indranil29@gmail.com}

\author[A. Singh]{Anoop Singh}

\address{Department of Mathematical Sciences, Indian Institute of Technology (BHU), Varanasi 
221005, India}

\email{anoopsingh.mat@iitbhu.ac.in}

\subjclass{14H60, 53D17, 53B15, 32C38}

\keywords{Lie algebroid, holomorphic connection, Atiyah bundle}

\date{}

\begin{abstract}
Let $(V,  \phi)$ be a holomorphic Lie algebroid over an irreducible smooth complex projective variety
$X$ of dimension at least three, and let $E$ be a holomorphic vector bundle on $X$. We establish a
necessary and sufficient condition for the existence of a holomorphic $(V, \phi)$--connection on $E$.
\end{abstract}

\maketitle

\section{Introduction}

Let $X$ be an irreducible smooth projective variety over the field $\mathbb{C}$ of complex numbers.
Holomorphic connections on holomorphic vector bundles over $X$ were introduced by
Atiyah \cite{At}. Holomorphic Lie algebroid connections (see Definition \ref{Def-3}) over $X$
provide a natural generalization of holomorphic connections, where
the tangent Lie algebroid $(TX,  \mathrm{Id}_{TX})$ is replaced by
an arbitrary holomorphic Lie algebroid $(V,  \phi)$ over $X$ (see Definition \ref{Def-1} for
holomorphic Lie algebroids).

It may be mentioned that by choosing the Lie algebroid appropriately, a wide range of other 
algebraic and differential geometric objects can be interpreted as Lie algebroid 
connections. For instance, Higgs bundles, \cite{Hi}, \cite{Si1}, twisted Higgs bundles, 
\cite{Ni1}, \cite{GGPN}, logarithmic connections, \cite{De}, \cite{Ni2}, meromorphic connections, 
\cite{Bo}, \cite{BS2}, and a broad subclass of Simpson's notion of $\Lambda$-modules
can all be understood as Lie algebroid connections for suitable choices of the Lie 
algebroid $(V,  \phi)$ (see, \cite{Si2}, \cite{To2}, \cite{AO}).

As far as holomorphic connections are concerned, not every holomorphic vector bundle over an
irreducible smooth complex projective variety $X$ admits a holomorphic connection. The existence of 
holomorphic connections is a highly restrictive condition. A holomorphic vector bundle over a 
compact Riemann surface does not always admit a holomorphic connection. There is a criterion 
that determines when such a connection exists. On a compact connected Riemann surface, a
holomorphic vector bundle $F$ admits a holomorphic connection if and only if the degree of each
indecomposable component of $F$ is zero \cite{At}, \cite{We}.

Further, if $X$ is a compact connected K\"ahler Calabi-Yau manifold, meaning $c_1(TX)  =  0$,
then any holomorphic vector bundle over $X$ admitting a
holomorphic connection also admits a flat holomorphic connection \cite{BD}.

See \cite{ABKS} for a criterion for the existence of Lie algebroid connections on holomorphic vector bundles 
over a compact connected Riemann surface.

Atiyah proved in \cite[Proposition~21]{At} a criterion for the existence of holomorphic connections on an
irreducible smooth projective variety $ X$ over $\mathbb{C}$ of complex dimension at least $3$. In this
work, we aim to generalize Atiyah’s criterion to the setting of Lie algebroid connections in higher
dimensions. More precisely, we establish the following result (see Theorem \ref{thm:main}).

\begin{theorem}
\label{thm:main0}
Assume that $X   \subset  {\mathbb C}{\mathbb P}^N$ is an irreducible
smooth complex projective variety of dimension at least three. Let $(V,  \phi)$ be
a holomorphic Lie algebroid on $X$ such that the anchor map $\phi   :   V 
\longrightarrow  TX$ is surjective. A holomorphic vector bundle $E$ over $X$ admits a holomorphic
$(V,  \phi)$-connection if and only if the restriction of $E$ to the intersection of $X$ with a sufficiently
positive hypersurface in ${\mathbb C}{\mathbb P}^N$ admits a holomorphic Lie algebroid connection.
\end{theorem}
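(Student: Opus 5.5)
The plan is to follow Atiyah's argument for \cite[Proposition~21]{At}, with the Atiyah bundle replaced by a Lie algebroid version. Set $\mathrm{At}_V(E) := \mathrm{At}(E)\times_{TX} V$, the fiber product of the Atiyah bundle $\mathrm{At}(E)\to TX$ with the anchor $\phi: V\to TX$. This gives a short exact sequence
\[
0\longrightarrow \mathrm{End}(E)\longrightarrow \mathrm{At}_V(E)\longrightarrow V\longrightarrow 0 .
\]
A holomorphic $(V,\phi)$-connection on $E$ is the same as a holomorphic splitting of this sequence. Hence $E$ admits such a connection if and only if the extension class
\[
\alpha_V(E)\in H^1(X, \mathrm{Hom}(V,\mathrm{End}(E))) = H^1(X, V^*\otimes \mathrm{End}(E))
\]
vanishes. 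By construction $\alpha_V(E)=\phi^*\alpha(E)$, where $\alpha(E)$ is the Atiyah class.

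\textbf{Compatibility with restriction.} Let $Y=X\cap H$ for a smooth hypersurface $H$ of degree $d$. Surjectivity of $\phi$ should give an induced Lie algebroid $(V_Y,\phi_Y)$ on $Y$, namely $V_Y:=\phi^{-1}(TY)\subset V|_Y$, with anchor $V_Y\to TY$ still surjective. I will check that the class of $E|_Y$ for this algebroid is the image of $\alpha_V(E)$ under
\[
H^1(X, V^*\otimes \mathrm{End}(E)) \xrightarrow{\ \rho\ } H^1(Y, V|_Y^*\otimes \mathrm{End}(E|_Y)) \longrightarrow H^1(Y, V_Y^*\otimes\mathrm{End}(E|_Y)).
\]
This follows from naturality of the Atiyah sequence under restriction and pullback along $V_Y\hookrightarrow V|_Y$. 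The direction "$E$ admits a connection $\Rightarrow$ $E|_Y$ does" is then immediate.

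\textbf{Injectivity.} For the converse I need the composite above to be injective for $d\gg 0$. Write $W=V^*\otimes\mathrm{End}(E)$. The sequence $0\to W(-d)\to W\to W|_Y\to 0$ shows that $\rho$ is injective once $H^1(X,W(-d))=0$. By Serre duality this group is $H^{n-1}(X, W^*\otimes K_X(d))^*$, which vanishes for $d\gg0$ by Serre vanishing since $n-1\ge 1$. Next comes the second map. The exact sequence $0\to V_Y\to V|_Y\to N_Y=\mathcal O_Y(d)\to 0$ (the quotient is $TX|_Y/TY$ because $\phi$ is surjective) dualizes to
\[
0\to \mathcal O_Y(-d)\otimes\mathrm{End}(E|_Y)\to V|_Y^*\otimes\mathrm{End}(E|_Y)\to V_Y^*\otimes\mathrm{End}(E|_Y)\to 0 .
\]
So the kernel of the second map is the image of $H^1(Y,\mathrm{End}(E|_Y)(-d))$. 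This group vanishes for $d\gg0$ by Serre duality on $Y$, since $\dim Y=n-1\ge 2$ gives $H^{n-2}(Y,\cdot(d)\otimes K_Y)$ with $n-2\ge1$. Here I must be careful, because $K_Y=K_X(d)|_Y$ grows with $d$. The group equals $H^{n-2}(Y,\mathrm{End}(E)\otimes K_X(2d)|_Y)^*$, and I will handle it through the restriction sequences from $X$: the groups $H^{n-2}(X,\cdot(2d))$ and $H^{n-1}(X,\cdot(d))$ vanish for large $d$, uniformly.

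\textbf{Main obstacle.} The dimension hypothesis $n\ge3$ enters exactly in this uniform vanishing, as in Atiyah's proof. I expect the main work to be in making the vanishing uniform in $d$, and in verifying that the restricted Lie algebroid structure (bracket on $V_Y$) is well defined.
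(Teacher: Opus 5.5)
Your proposal is correct and follows essentially the paper's argument (restriction injectivity, the dual normal sequence, then functoriality of the $V$-Atiyah class). The Atiyah-bundle fiber product is just the dual description of the paper's $V$-jet sequence, and your Serre-duality-on-$Y$ route needs the vanishing of $H^{n-2}(X,\mathrm{End}(E)\otimes K_X(2d))$ and $H^{n-1}(X,\mathrm{End}(E)\otimes K_X(d))$, exactly the Serre duals of the groups $H^2(X,\mathrm{End}(E)(-2d))$ and $H^1(X,\mathrm{End}(E)(-d))$ that the paper kills using $0\to\mathrm{End}(E)(-2d)\to\mathrm{End}(E)(-d)\to\mathrm{End}(E)(-d)|_Y\to 0$; the one small fix is to take $H$ general (Bertini), not merely smooth, so that $Y=X\cap H$ is smooth.
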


In addition to the general case, we also consider Lie algebroids associated to reduced
effective divisors. For a reduced effective divisor $Z \subset X$, we consider the Lie algebroid
$$j : W = TX \otimes \mathcal{O}_X(-Z)  \hookrightarrow  TX.$$ We prove that, under the
same dimensional hypothesis, the existence of a holomorphic $(W, j)$-connection on a vector bundle $E$ over
$X$ is equivalent to the
existence of such a connection on its restriction to some smooth
hypersurface $X_n$ of sufficiently large degree $n$, which intersects $Z$ properly; see Theorem \ref{thm:red}.

\section{Lie algebroids and connections}

Let $X$ be an irreducible smooth projective variety over $\mathbb{C}$. The holomorphic tangent and
cotangent bundles of $X$ will be denoted by $TX$ and $\Omega^1_X$ respectively.

\begin{definition}\label{Def-1}
A Lie algebroid on $X$ is a holomorphic vector bundle $V \longrightarrow X$,
together with an ${\mathcal{O}}_X$--linear homomorphism $$\phi : V  \longrightarrow TX,$$ called the {\it anchor
map}, and a structure of a $\mathbb{C}$--Lie algebra on the sheaf of locally defined holomorphic sections of $V$
$$
[-, \,-]  : V \otimes_{\mathbb{C}} V  \longrightarrow V
$$
such that $$[s,\, f\cdot t]  =  f \cdot [s,\, t]+\phi(s)(f) \cdot t$$ for all locally defined
holomorphic sections $s,\, t$ of $V$ and all locally defined holomorphic functions $f$ on $X$.
\end{definition}

Note that the pair $(TX,  {\rm Id}_{TX})$ is a Lie algebroid; the Lie algebra structure on
$TX$ is given by the Lie bracket operation of vector fields.

\begin{definition}
\label{Def-2}
A Lie algebroid $(V, \phi)$ on $X$ is called \textit{split} if there is a holomorphic
${\mathcal O}_X$--linear homomorphism $\eta : TX \longrightarrow V$ such that
\begin{equation}\label{eta}
\phi\circ\eta  =  \text{Id}_{TX}.
\end{equation}
A Lie algebroid $(V, \phi)$ on $X$ is called \textit{nonsplit} if it is not split.
\end{definition}

\begin{definition}
\label{Def-3}
Take a Lie algebroid $(V, \phi)$ on $X$.
Let $\phi^*: \Omega^1_X  \longrightarrow V^*$ be the dual of $\phi$.
A \emph{Lie algebroid connection} on a holomorphic vector bundle $E$ on $X$ 
is a $\mathbb{C}$--linear holomorphic map 
$$
\nabla : E \longrightarrow E \otimes V^*
$$
such that $$\nabla (fs) = f \nabla (s) + s\otimes \phi^*(df), $$ where $s$ is any locally
defined holomorphic section of $E$ and $f$ is any locally defined
holomorphic function on $X$. A Lie algebroid connection on $E$ will also be called a $(V, \phi)$--connection on $E$ or simply
a $\phi$--connection on $E$.
\end{definition}

When $(V, \phi)  =  (TX, {\rm Id}_{TX})$, a $(V, \phi)$--connection on $E$ is a 
holomorphic connection on $E$ in the usual sense \cite{At}.

For any Lie algebroid $(V, \phi)$, and any $n \geq 0$, there is a $\mathbb C$--linear homomorphism
$$
d^n_V : \bigwedge\nolimits^n V^*  \longrightarrow \bigwedge\nolimits^{n+1} V^*
$$
which is constructed as follows:
\begin{equation}\label{d0}
d_V^0(f)  = \phi^*(df)
\end{equation}
for any locally defined holomorphic function $f$
on $X$. To construct $d^n_V$ for $n\, \geq\, 1$, take locally defined holomorphic sections $\omega  \in
\bigwedge^n V^*$ and $v_1, \ldots, v_{n+1} \in V$; then
\begin{equation}\label{d10}
d^n_V(\omega)(v_1, \ldots, v_{n+1}) = \sum_{i=1}^n (-1)^{i+1}\phi(v_i)(\omega(v_1, \ldots,
\widehat{v}_i, \ldots,  v_{n+1}))
\end{equation}
$$
+\sum_{1\le i<j\le n+1}(-1)^{i+j} \omega\left ([v_i, v_j], v_1, \ldots,
\widehat{v}_i, \ldots,\,\widehat{v}_j, \ldots, v_{n+1}\right).
$$
It can be easily checked that $d^{n+1}_V \circ d^{n}_V = 0$ for every $n \geq 0$, and it follows that 
$$\left (\bigwedge\nolimits^\bullet V^*, d_V \right) \ =\
\bigoplus_{n\geq 0} \left(\bigwedge\nolimits^n V^*, d^n_V\right)$$
is a differential graded complex; it
is called the Chevalley-Eilenberg-de Rham complex for $(V, \phi)$ (see \cite{BMRT}, \cite{LSX},
\cite{BR} for details).
Note that when $(V, \phi)  =  (TX, {\rm Id}_{TX})$, then $(\bigwedge\nolimits^\bullet V^*, d_V)$
is the holomorphic de Rham complex of $X$.

Given a Lie algebroid connection $\nabla  :  E  \longrightarrow  E\otimes V^*$, consider the
following composition of operators
$$
E   \stackrel{\nabla}{\longrightarrow}  E\otimes V^*  \xrightarrow{\,\nabla \wedge {\rm Id}_{V^*}+
{\rm Id}_E\otimes d^1_V }   E\otimes \bigwedge\nolimits^2 V^*.
$$
It is straightforward to check that this operator $E \longrightarrow E\otimes \bigwedge\nolimits^2 V^*$
is ${\mathcal O}_X$--linear, and hence it is given by a holomorphic section
$$
{\mathcal K}(\nabla) \in  H^0(X,\, \text{End}(E)\otimes \bigwedge\nolimits^2 V^*).
$$
This section ${\mathcal K}(\nabla)$ is called the \textit{curvature} of $\nabla$. The 
Lie algebroid connection $\nabla$ is called \textit{integrable} (or \textit{flat}) if
we have ${\mathcal K}(\nabla)\,=\, 0$.

Observe that on a smooth complex projective variety $X$ over $\mathbb{C}$, the flatness of a 
$(V, \phi)$-connection is not guaranteed when $\mathrm{rank}(V) \ge  2$, even in the case where $\dim X  =  1$ 
or when the Lie algebroid $(V, \phi)$ admits a splitting.

Consider the following direct sum of $\mathbb{C}$-modules:
\begin{equation}\label{eq:jet}
J^1_{V}(E) :=  E\oplus(E\otimes V^*).
 \end{equation}
We can equip $J^1_{V}(E)$ with two $\mathcal{O}_X$-module structures. The first one is coordinate-wise
multiplication $$ f \cdot (s,  \sigma)  :=  (f s,   f \sigma),$$
and the second one is given by
\begin{equation}
\label{eq:mult}
f \cdot (s,  \sigma) := (f s,  f \sigma + s \otimes d_{V}f),
\end{equation}
where $s$ (respectively, $\sigma$) is a locally defined holomorphic section of $E$
(respectively, $E\otimes V^*$) and $f$ is a locally defined holomorphic function on $X$. The $\mathbb{C}$-module
$J^1_{V}(E)$ equipped with the $\mathcal{O}_X$-module structure in \eqref{eq:mult} is called the {\it first order $V$-jet bundle}
associated to $E$. This first order $V$-jet bundle $J^1_{V}(E)$ fits into
the following short exact sequence of $\mathcal{O}_X$-modules,
\begin{equation}\label{eq:2}
0 \longrightarrow  E\otimes V^* \xrightarrow{\, \iota_E\,} J^1_{V}(E)  \xrightarrow{\,p_E \,} E \longrightarrow 0,
\end{equation}
where $\iota_E$ and $p_E$ are the canonical inclusion and projection respectively. 
In general, the short exact sequence in \eqref{eq:2} may not split holomorphically as $\mathcal{O}_X$-modules.
It is easy to see that a holomorphic $(V,\, \phi)$--connection on $E$ is a holomorphic splitting of \eqref{eq:2}.

Let 
\begin{equation}\label{eq:at}
\mathrm{at}_V(E) \in  \mathrm{H}^1(X, \mathrm{End}(E) \otimes V^* )
\end{equation}
denote the extension class of the short exact sequence in \eqref{eq:2}, called the $V$-Atiyah class.
It was noted before that giving a holomorphic splitting of \eqref{eq:2} is equivalent to giving a
holomorphic $(V, \phi)$-connection on $E$. Therefore, 
$E$ admits a $(V, \phi)$-connection if and only if we have $\mathrm{at}_V(E)  =  0$.

The $V$-Atiyah class $\mathrm{at}_V(E)$ in \eqref{eq:at} has functorial property as follows. Let
$$f  :  Y  \longrightarrow  X$$ be a morphism between smooth projective varieties over $\mathbb{C}$. We have its
differential $$df :  TY  \longrightarrow  f^*TX$$. Given a Lie
algebroid $(V, \phi)$ over $X$, consider the following commutative diagram
\begin{equation}\label{eq:cd1}
\begin{gathered}
\xymatrix{
W \ar[d]^{\widehat{\phi}} \ar[r]^{p} & f^*V \ar[d]^{f^*\phi} \\
TY \ar[r]^{df} & f^*TX
}
\end{gathered}
\end{equation} 
where $W  =  TY \times_{f^*TX} f^*V$ is the fiber product, while $\widehat{\phi} : W  \longrightarrow
 TY$ and $p  :  W
 \longrightarrow  f^*V$ are the natural projections from the fiber product. Note that $W$ is the subbundle
of $TY\oplus f^*V$ whose fiber over any $y  \in  Y$ is the subspace of $T_yY\oplus (f^*V)_y$ consisting of all
$(a, b)$, where $a  \in  T_yY$ and $b  \in  (f^*V)_y$, such that $f^*\phi (b)  =  df (a)$.
The $\mathbb C$--Lie algebra structure on $V$ pulls back to a $\mathbb C$--Lie algebra on $f^*V$;
the $\mathbb C$--Lie algebra structures on $TY$ and $f^*V$ together produce a $\mathbb C$--Lie
algebra structure on $W$. Thus $(W,  \widehat{\phi})$ is a Lie algebroid over $Y$. Note that the maps
$p$ and $\widehat\phi$ in \eqref{eq:cd1} are Lie algebra structure preserving.

Now, given a $(V,  \phi)$-connection $\nabla$ on $E  \longrightarrow   X$, the pullback $f^*E 
\longrightarrow  Y$ is equipped with a $(W,  \widehat{\phi})$--connection; this $(W, 
\widehat{\phi} )$--connection is given by the following composition of maps
$$
f^*E   \xrightarrow{\, f^*\nabla \,}  f^*(E \otimes V^*)    =   (f^*E)\otimes (f^*V^*) 
\xrightarrow{\,{\rm Id}_{f^*E}\otimes p^*\,} (f^*E)\otimes W^*,
$$
where $p^*$ is dual of the map $p$ in \eqref{eq:cd1}. The above $(W,  \widehat{\phi})$--connection on $f^*E$ will
be denoted by $f^*\nabla$. Next, the map $f  :  Y  \longrightarrow  X$ induces a natural map on the cohomology groups
$$f^\sharp  :  \mathrm{H}^1 (X,  \mathrm{End}(E) \otimes V^*)  \longrightarrow  \mathrm{H}^1 (Y, 
\mathrm{End}(f^*E) \otimes W^*).$$ It is straightforward to check that
\begin{equation}\label{cc}
f^\sharp (\mathrm{at}_V(E))   =  \mathrm{at}_W(f^*E)
\end{equation}
(see \eqref{eq:at}). Indeed, \eqref{cc} follows immediately from the fact that we have a commutative diagram
$$
\xymatrix@R=1.5cm@C=1.5cm{
0 \ar[r] 
& (f^*E)\otimes (f^*V^*) \ar[r]^{f^*\iota_E} \ar[d]_{\mathrm{Id}_{f^*E}\otimes p^*}
& f^*J^1_{V}(E) \ar[r]^{f^*p_E} \ar[d]^{\beta}
& f^*E \ar[r] \ar@{=}[d]
& 0 \\
0 \ar[r]
& (f^*E)\otimes W^* \ar[r]
& J^1_{W}(f^*E) \ar[r]
& f^*E \ar[r]
& 0
}
$$
(see \eqref{eq:2}), where $\beta$ is the restriction of the homomorphism
$$
{\rm Id}_{f^*E}\oplus ({\rm Id}_{f^*E}\otimes p^*)   :  
f^* (E\oplus(E\otimes V^*))  =   (f^* E)\oplus (f^* (E\otimes V^*))  \longrightarrow
(f^* E)\oplus ((f^*E)\otimes W^*).
$$

\section{Criterion for existence of Lie algebroid connection}

Let $X$ be embedded in the projective space $\mathbb{CP}^N$. Given a general hypersurface $H_n$ of degree $n$ 
in $\mathbb{CP}^N$, let $X_n  :=  X \cap H_n $ denote the hypersurface of degree $n$ in $X$. Given a
holomorphic vector bundle $E$ over $X$, denote by $E\big\vert_n$ the restriction
$E\big\vert_{X_n}$ of $E$ to $X_n$.

Consider the following short exact sequence of coherent sheaves over $X_n$
\begin{equation}
\label{eq:s1}
0 \longrightarrow  TX_n  \xrightarrow{\,\iota \,} TX\big\vert_{n}  \xrightarrow{\, \tau_n
\, }\ \mathcal{O}_X(X_n)\big\vert_{X_n}  =  N_{X_n}  \longrightarrow   0,
\end{equation}
where $N_{X_n}$ is the normal bundle of $X_n$, and $\tau_n$ is the projection to the normal bundle;
the above isomorphism $\mathcal{O}_X(X_n)\big\vert_{X_n}  =  N_{X_n}$ is given by the Poincar\'e adjunction
formula. Given a Lie algebroid $(V, \phi)$ over $X$, let
\begin{equation}
\label{eq:s2}
\psi  :  V\big\vert_n   \longrightarrow  \mathcal{O}_X(X_n)\big\vert_{X_n}
\end{equation}
be the following composition of homomorphisms
$$
V\big\vert_n   \xrightarrow{\, \phi\big\vert_{X_n} \,}\ TX\big\vert_n  \xrightarrow{
\tau_n } \mathcal{O}_X(X_n)\big\vert_n.
$$
Denote 
\begin{equation}
\label{eq:s3}
V_n := \mathrm{Ker}(\psi)  =  (\phi\big\vert_{X_n})^{-1} (TX_n)  \subset V\big\vert_{n},
\end{equation}
where $\psi$ is the homomorphism in \eqref{eq:s2}; the restriction of $\phi\big\vert_{X_n}$ to
$V_n$ will be denoted by $\phi_n$. Then the pair 
\begin{equation}
\label{eq:V_n}
(V_n, \phi_n)
\end{equation}
is a Lie algebroid over $X_n$. Moreover, we get a short exact sequence of $\mathcal{O}_{X_n}$-modules
\begin{equation}
\label{eq:s4}
0  \longrightarrow  V_n  \xrightarrow{j_n}  V\big\vert_{n} \xrightarrow{
\psi } \mathrm{Im}(\psi) \longrightarrow  0
\end{equation}

If $\psi$ in \eqref{eq:s2} is surjective --- this happens, for example, if $\phi$ is surjective --- then 
$$
\mathrm{Im}(\psi) \cong  \mathcal{O}_X(X_n)\big\vert_{X_n} =  N_{X_n}.
$$
Consequently, when $\phi$ is surjective, from \eqref{eq:s4} we get the following short exact sequence
\begin{equation}\label{eq:s5}
0  \longrightarrow V_n  \xrightarrow{j_n}  V\big\vert_{n} \xrightarrow{\,
\psi\, }\ \mathcal{O}_X(X_n)\big\vert_{n} \longrightarrow  0.
\end{equation}

\begin{theorem}\label{thm:main}
Assume that $\mathrm{dim}_{\mathbb{C}} (X)  \geq  3$ and the anchor map $\phi  :  V  \longrightarrow 
TX$ is surjective. A holomorphic vector bundle $E$ over $X$ admits a holomorphic $(V,\,\phi)$-connection if
and only if the restriction $E\big\vert_{X_n}$ admits a holomorphic $(V_n, \phi_n)$-connection
for sufficiently large $n$, where $(V_n, \phi_n)$ is the Lie algebroid constructed in \eqref{eq:V_n}.
\end{theorem}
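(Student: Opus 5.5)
The plan is to follow Atiyah's argument in \cite[Proposition~21]{At}, now applied to the $V$-Atiyah class. The ``only if'' direction is immediate from functoriality. For the inclusion $\iota_n : X_n \hookrightarrow X$, the fiber product $W = TX_n\times_{TX|_n} V|_n$ in \eqref{eq:cd1} is exactly $(\phi|_{X_n})^{-1}(TX_n) = V_n$, and $\widehat\phi = \phi_n$. Hence, by \eqref{cc}, $\iota_n^\sharp(\mathrm{at}_V(E)) = \mathrm{at}_{V_n}(E|_n)$. So if $\mathrm{at}_V(E)=0$, then $E|_n$ admits a $(V_n,\phi_n)$-connection for every $n$.

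For the converse it suffices to show that, for $n$ large, the map
$$\iota_n^\sharp : \mathrm{H}^1(X, \mathrm{End}(E)\otimes V^*) \longrightarrow \mathrm{H}^1(X_n, \mathrm{End}(E|_n)\otimes V_n^*)$$
is injective. We factor it as restriction followed by the map induced by $j_n^*$:
$$\mathrm{H}^1(X, \mathrm{End}(E)\otimes V^*) \xrightarrow{\ r\ } \mathrm{H}^1(X_n, \mathrm{End}(E|_n)\otimes V^*|_n) \xrightarrow{\ (j_n^*)_*\ } \mathrm{H}^1(X_n, \mathrm{End}(E|_n)\otimes V_n^*).$$

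For $r$, we tensor $0\to \mathcal{O}_X(-X_n)\to \mathcal{O}_X\to \mathcal{O}_{X_n}\to 0$ with the bundle $F:=\mathrm{End}(E)\otimes V^*$. The kernel of $r$ is then a quotient of $\mathrm{H}^1(X, F\otimes\mathcal{O}_X(-n))$. By Serre duality this group is dual to $\mathrm{H}^{d-1}(X, F^*\otimes K_X\otimes \mathcal{O}_X(n))$, where $d=\dim X$. Serre vanishing kills it for $n\gg 0$, since $d-1\ge 1$.

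For the second map, we dualize \eqref{eq:s5}, which is exact because $\phi$ is surjective, and tensor with $\mathrm{End}(E|_n)$:
$$0\to \mathrm{End}(E|_n)\otimes\mathcal{O}_X(-n)|_{X_n} \to \mathrm{End}(E|_n)\otimes V^*|_n \to \mathrm{End}(E|_n)\otimes V_n^*\to 0.$$
Thus the kernel of $(j_n^*)_*$ is a quotient of $\mathrm{H}^1(X_n, \mathrm{End}(E)\otimes\mathcal{O}_X(-n)|_{X_n})$. This is the step I expect to be the main obstacle, since both the variety $X_n$ and the twist vary with $n$, so plain Serre vanishing does not apply directly. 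I would handle it as Atiyah does. Set $G=\mathrm{End}(E)$ and use the sequence
$$0\to G(-2n)\to G(-n)\to G(-n)|_{X_n}\to 0$$
on $X$. Then $\mathrm{H}^1(X_n, G(-n)|_{X_n})$ sits between $\mathrm{H}^1(X, G(-n))$ and $\mathrm{H}^2(X, G(-2n))$. By Serre duality these are dual to $\mathrm{H}^{d-1}(X, G^*\otimes K_X(n))$ and $\mathrm{H}^{d-2}(X, G^*\otimes K_X(2n))$. Both vanish for $n\gg0$ precisely because $d\ge 3$, so that $d-2\ge1$. This is where the dimension hypothesis enters.

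Combining the two steps, $\iota_n^\sharp$ is injective for $n$ large. Hence $\mathrm{at}_{V_n}(E|_n)=0$ forces $\mathrm{at}_V(E)=0$, i.e.\ $E$ admits a $(V,\phi)$-connection. The only point needing care is that a single $n_0$ works uniformly for both vanishing statements. This holds since only finitely many fixed coherent sheaves on $X$ are twisted.
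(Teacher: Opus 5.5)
Your proposal is correct and follows essentially the same route as the paper. You factor $i^\sharp$ as restriction to $X_n$ followed by the map induced by the dual of \eqref{eq:s5}, kill both kernels by Serre duality and Serre vanishing (with $\dim X\ge 3$ entering through $\mathrm{H}^2(X,G(-2n))=0$ in the sequence $0\to G(-2n)\to G(-n)\to G(-n)|_{X_n}\to 0$), and conclude by functoriality of the Atiyah class. Two minor differences both work in your favour: you prove only the needed injectivity of the restriction map rather than the isomorphism \eqref{eq:s8}, and you apply Serre duality only to locally free sheaves, which is where the vanishing \eqref{eq:s7} actually holds.
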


\begin{proof} 
Let $\mathcal{F}$ be a coherent sheaf of $\mathcal{O}_X$-modules on $X$. Then, we have the exact
sequence of coherent sheaves
\begin{equation} \label{eq:s6}
0 \longrightarrow \mathcal{F} \otimes \mathcal{O}_X(-X_n) \longrightarrow
\mathcal{F} \longrightarrow i_*\left( \mathcal{F}\big\vert_{X_n} \right) \longrightarrow 0,
\end{equation}
where $ i :  X_n  \hookrightarrow   X$ is the inclusion map. Let $\mathcal{O}_X(1)
:= i^* \mathcal{O}_{\mathbb{CP}^N} (1)$. Then, $\mathcal{O}_X(1)$ is very ample and $\mathcal{O}_X(X_n)
 \cong  \mathcal{O}_X(n)$. Since
$\dim X \geq  3$, it follows from the Serre's duality theorem \cite[Theorem 7.6, p.~243]{Ha} and Serre's theorem
\cite[Theorem 5.2, p.~228]{Ha} that for $n  \gg  0$, we have
\begin{equation} \label{eq:s7}
\mathrm{H}^i\left(X,  \mathcal{F} \otimes \mathcal{O}_X(-X_n) \right)  =  0, \quad \forall\,  i  = 1,  2.
\end{equation}
Consider the long exact sequence of cohomologies associated with the short exact sequence
\eqref{eq:s6}
$$
\mathrm{H}^1\left(X,\, \mathcal{F} \otimes \mathcal{O}_X(-X_n) \right) 
 \longrightarrow  \mathrm{H}^1(X,  \mathcal{F})  \longrightarrow
\mathrm{H}^1\left(X,  i_*\left( \mathcal{F}\big\vert_{X_n} \right)\right)  = 
\mathrm{H}^1\left(X_n, \mathcal{F}\big\vert_{X_n}\right)\,$$
$$
\longrightarrow 
\mathrm{H}^2\left(X, \mathcal{F} \otimes \mathcal{O}_X(-X_n) \right).
$$
Using \eqref{eq:s7} in this exact sequence we get an isomorphism
\begin{equation} \label{eq:s8}
\mathrm{H}^1(X, \mathcal{F})  \xrightarrow{\, \cong \,}
\mathrm{H}^1\left(X_n, \mathcal{F}\big\vert_{X_n}\right).
\end{equation}
Since $\phi$ is surjective (by assumption), we have the short exact sequence in \eqref{eq:s5}. Consider
its dual exact sequence
\begin{equation}
\label{eq:s9}
0  \longrightarrow  \mathcal{O}_X(-X_n)\big\vert_n  \longrightarrow  V^*\big\vert_n
\longrightarrow  V^*_n  \longrightarrow   0. 
\end{equation}
Tensoring the short exact sequence in \eqref{eq:s9} with $\mathrm{End} (E)\big\vert_{X_n}$ the following
short exact sequence is obtained:
\begin{equation}
\label{eq:s10}
0 \longrightarrow  \left( \mathrm{End} (E) \otimes \mathcal{O}_X(-X_n) \right)\big\vert_n
 \longrightarrow  \left( \mathrm{End} (E) \otimes V^* \right)\big\vert_n  \longrightarrow  
\mathrm{End} (E)\big\vert_n \otimes V^*_n  \longrightarrow  0.
\end{equation}

Setting $\mathcal{F}  =  \mathrm{End} (E) \otimes \mathcal{O}_X(-X_n) $ in the short exact sequence in
\eqref{eq:s6}, we get the following
\begin{equation}\label{eq:s11}
0  \longrightarrow  \mathrm{End} (E)\otimes \mathcal{O}_X(-2 X_n)   \longrightarrow  \mathrm{End} (E)\otimes
\mathcal{O}_X(-X_n)   \longrightarrow  \left( \mathrm{End} (E) \otimes \mathcal{O}_X(-X_n) \right)\big\vert_n
\longrightarrow  0.
\end{equation}
Consider the long exact sequence of cohomologies associated to \eqref{eq:s11}:
\begin{equation*}
\longrightarrow  \mathrm{H}^1\left(X,  \mathrm{End} (E)\otimes \mathcal{O}_X(- X_n) \right)   \longrightarrow 
\mathrm{H}^1\left(X_n,  \left( \mathrm{End} (E) \otimes \mathcal{O}_X(-X_n) \right)\big\vert_n \right)
 \end{equation*}
\begin{equation}\label{eq:s11b}
\longrightarrow \mathrm{H}^2\left(X,  \mathrm{End} (E)\otimes \mathcal{O}_X(-2 X_n) \right) 
\longrightarrow.
\end{equation}
Note that from \eqref{eq:s7} we have 
\begin{equation}
\label{eq:s12}
\mathrm{H}^2\left(X,  \mathrm{End} (E)\otimes \mathcal{O}_X(-2 X_n) \right)  =  0  =  
\mathrm{H}^1\left(X,  \mathrm{End} (E)\otimes \mathcal{O}_X(- X_n) \right)
\end{equation}
for all $n  \gg  0$. This and \eqref{eq:s11b} combine together to give that
\begin{equation}
\label{eq:s13}
\mathrm{H}^1\left(X_n,  \left( \mathrm{End} (E) \otimes \mathcal{O}_X(-X_n) \right)\big\vert_n \right)  =  0
\end{equation}
for all $n  \gg   0$.

Consider the long exact sequence of cohomologies for the short exact sequence in \eqref{eq:s10}:
$$
\longrightarrow  \mathrm{H}^1\left(X_n, \left( \mathrm{End} (E) \otimes
\mathcal{O}_X(-X_n) \right)\big\vert_n \right)
 \longrightarrow  \mathrm{H}^1 (X_n, \left( \mathrm{End} (E)\otimes V^* \right)\big\vert_n) 
\longrightarrow  \mathrm{H}^1 (X_n, \mathrm{End} (E)\big\vert_n \otimes V^*_n)  \longrightarrow .
$$
In view of \eqref{eq:s13}, it gives an injective homomorphism
\begin{equation}
\label{eq:s14}
\mathrm{H}^1 (X_n,  \left( \mathrm{End} (E)\otimes V^* \right)\big\vert_n)  \hookrightarrow 
\mathrm{H}^1 (X_n,  \mathrm{End} (E)\big\vert_n \otimes V^*_n).
\end{equation}
Also, from the isomorphism in \eqref{eq:s8} we have 
\begin{equation}
\label{eq:s15}
\mathrm{H}^1(X, \mathrm{End} (E)\otimes V^*)\ \xrightarrow{\, \cong \,}
\mathrm{H}^1 (X_n, \left( \mathrm{End} (E)\otimes V^* \right)\big\vert_n)
\end{equation}
for all $n  \gg  0$. Now, from \eqref{eq:s14} and \eqref{eq:s15}, the homomorphism 
$$
i^{\sharp}  :  \mathrm{H}^1(X,\, \mathrm{End} (E)\otimes V^*)  \longrightarrow
\mathrm{H}^1 (X_n, \mathrm{End} (E)\big\vert_n \otimes V^*_n)
$$
induced by the natural inclusion map $i  :  X_n  \hookrightarrow  X$ is injective, so
\begin{equation}\label{eq:s16}
i^{\sharp}  :  \mathrm{H}^1(X,  \mathrm{End} (E)\otimes V^*)  \hookrightarrow 
\mathrm{H}^1 (X_n,  \mathrm{End} (E)\big\vert_n \otimes V^*_n).
\end{equation}

Note that $i^{\sharp} (\mathrm{at}_V(E))   =  \mathrm{at}_{V_n}(E\big\vert_n)$ by the functoriality of
the Atiyah class. Thus, from \eqref{eq:s16} it follows immediately that for all $n  \gg  0$, we have $\mathrm{at}_{V_n}(E\big\vert_n)  =   0$ if and only if $\mathrm{at}_V(E)  =  0$.
This completes the proof. 
\end{proof}

Let $X$ be a smooth complex projective variety of dimension $d   \geq  1$. 
Let $\mathrm{Div}(X)$ be a group of all (Weil) divisors on $X$. Recall that every divisor
$Z   \in  \mathrm{Div}(X)$ can be written uniquely as (finite sum) $Z  =  \sum_{i} a_i Z_i$, where
$a_i  \in  \mathbb{Z}$ and each $Z_i  \subset  X$ is a prime divisor, i.e., an irreducible closed subvariety
of codimension $1$. Let $Z  =  \sum_{i} a_i Z_i$ and 
$Z' = \sum_{j} b_j Z'_j$ be two divisors on $X$. We say that $Z$ and $Z'$ \emph{meet properly} if
for every pair of prime divisors $Z_i$ and $Z'_j$ appearing with non-zero coefficients $a_i  \neq  0$
and $b_j  \neq  0 $ in $Z$ and $Z'$, respectively, one has
$$
\dim ( Z_i \cap Z'_j )   =  d - 2 .
$$

Now assume that $Z$ and $Z'$ are \emph{reduced effective divisors}. Then each of them is a union of distinct 
prime divisors with coefficient $1$. Under this assumption, if $Z$ and $Z'$ meet properly, the intersection 
$Z \cap Z'$ has pure dimension $d-2$, and hence $Z \cap Z'$ defines a Weil divisor on both $Z$ and $Z'$.

Let $Z$ be a reduced effective divisor on $X$. Then, consider the following Lie algebroid
\begin{equation}
\label{eq:red}
j :  W  =  TX \otimes \mathcal{O}_X(-Z)  \hookrightarrow  TX
\end{equation}
 with (non-surjective) anchor map as inclusion morphism $j$. Let $X_n$ be a smooth
hypersurface of degree $n$ in $X$. Suppose that $Z$ and $X_n$ meet properly, then 
$$
Z_n  :=  Z \cap X_n
$$ is a divisor on $X_n$. We get a Lie algebroid 
\begin{equation}
\label{eq:red-n}
j_n  :  W_n  :=  TX_n \otimes \mathcal{O}_{X_n}(-Z_n)  \hookrightarrow  TX_n
\end{equation}
on $X_n$. Using similar techniques as in Theorem \ref{thm:main}, we can show the following theorem. 

\begin{theorem} \label{thm:red}
Assume that $\dim_{\mathbb{C}}(X)  \ge  3$, and let $Z  \subset  X$ be a reduced effective divisor
on $X$. Consider the holomorphic Lie algebroid $j :  W  =  TX \otimes \mathcal{O}_X(-Z)  \hookrightarrow  TX $
on $X$. Then $E$ admits a $(W, j)$-connection if and only if for some smooth
hypersurface $X_n$ of sufficiently large degree $n$, which intersects $Z$ properly, the vector
bundle $E\big\vert_{X_n}$ on $X_n$ admits a $(W_n,  j_n)$-connection, where $(W_n,  j_n)$ is
defined in \eqref{eq:red-n}.
\end{theorem}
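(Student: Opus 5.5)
Since $E$ admits a $(W,j)$-connection exactly when $\mathrm{at}_W(E)=0$, and $E\big\vert_{X_n}$ admits a $(W_n,j_n)$-connection exactly when $\mathrm{at}_{W_n}(E\big\vert_{X_n})=0$, the plan is the same as in the proof of Theorem \ref{thm:main}. We relate the two classes by a restriction homomorphism and then show that this homomorphism is injective for $n\gg 0$. The anchor $j$ is not surjective, so we cannot take \eqref{eq:s5} as given. Instead we construct the analogue of that sequence by hand. Since $X_n$ meets $Z$ properly, $\mathcal{O}_X(-Z)\big\vert_{X_n}=\mathcal{O}_{X_n}(-Z_n)$. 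Tensoring \eqref{eq:s1} with this line bundle gives
$$
0\longrightarrow W_n \longrightarrow W\big\vert_{X_n} \longrightarrow \mathcal{O}_X(X_n-Z)\big\vert_{X_n}\longrightarrow 0 .
$$
Here $W_n=(j\big\vert_{X_n})^{-1}(TX_n)$, and the bracket and anchor of $W$ restrict to those of $(W_n,j_n)$. This is because $j_n$ is just the restriction of $j$, and vector fields vanishing along $Z$ and tangent to $X_n$ restrict to vector fields on $X_n$ vanishing along $Z_n$.

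Next we need the functoriality \eqref{cc} for the inclusion $i:X_n\hookrightarrow X$. We compare the fiber-product algebroid $TX_n\times_{TX\vert_{X_n}} W\big\vert_{X_n}$ with $W_n$. Since $j$ is injective as a sheaf map, the fiber product is exactly the subsheaf of $W\big\vert_{X_n}$ mapping into $TX_n$, which is $W_n$. Restricting the $V$-jet sequences as in the commutative diagram of Section 2 then gives $i^\sharp(\mathrm{at}_W(E))=\mathrm{at}_{W_n}(E\big\vert_{X_n})$. One point needs care: injectivity of $j$ on the sheaf level survives restriction to $X_n$. This holds because the restriction is the bundle map $TX_n(-Z_n)\to TX_n$, which is an isomorphism off $Z_n$, and $Z_n$ is a proper divisor on $X_n$.

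The cohomological part then runs exactly as in \eqref{eq:s6}--\eqref{eq:s16}. Dualize the sequence above and tensor with $\mathrm{End}(E)\big\vert_{X_n}$ to get
$$
0\to(\mathrm{End}(E)\otimes\mathcal{O}_X(Z-X_n))\big\vert_{X_n}\to(\mathrm{End}(E)\otimes W^*)\big\vert_{X_n}\to\mathrm{End}(E)\big\vert_{X_n}\otimes W_n^*\to 0 .
$$
We then use the following facts, all for $n\gg 0$:
\begin{itemize}
\item Take $\mathcal{F}=\mathrm{End}(E)\otimes W^*$ in \eqref{eq:s8}. Then $\mathrm{H}^1(X,\mathcal{F})\cong \mathrm{H}^1(X_n,\mathcal{F}\big\vert_{X_n})$.
\item Restrict the sequence $0\to \mathcal{G}(-2X_n)\to\mathcal{G}(-X_n)\to \mathcal{G}(-X_n)\big\vert_{X_n}\to0$ with $\mathcal{G}=\mathrm{End}(E)\otimes\mathcal{O}_X(Z)$. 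Because $\dim X\ge 3$, Serre duality and Serre vanishing give $\mathrm{H}^1(X,\mathcal{G}(-X_n))=0=\mathrm{H}^2(X,\mathcal{G}(-2X_n))$. Hence $\mathrm{H}^1(X_n,\mathcal{G}(-X_n)\big\vert_{X_n})=0$.
\end{itemize}
The second fact makes $\mathrm{H}^1(X_n,(\mathrm{End}(E)\otimes W^*)\big\vert_{X_n})\to \mathrm{H}^1(X_n,\mathrm{End}(E)\big\vert_{X_n}\otimes W_n^*)$ injective. Composing with the first isomorphism shows that $i^\sharp$ is injective, and the theorem follows. Both directions are covered, because the converse (a connection on $X$ restricts to one on $X_n$) is just functoriality.

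The main obstacle is the uniformity in $n$. The vanishing statements must hold for all $n\ge n_0$, with $n_0$ depending only on the fixed sheaves $\mathrm{End}(E)\otimes W^*$ and $\mathrm{End}(E)(Z)$; this holds because the twist $\mathcal{O}_X(Z)$ does not depend on $n$. At the same time $X_n$ has to be chosen smooth and meeting $Z$ properly. For the latter we would invoke Bertini: a general member of $|\mathcal{O}_X(n)|$ is smooth and contains no component of $Z$, hence meets $Z$ properly. Such a member therefore exists for every $n\ge n_0$, and the argument applies to it.
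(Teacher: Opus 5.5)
Your proposal is correct, and it is the adaptation of the proof of Theorem \ref{thm:main} that the paper means by ``similar techniques''. The one real change, which you handle correctly, is that $\psi=\tau_n\circ j\big\vert_{X_n}$ now has image $\mathcal{O}_X(X_n-Z)\big\vert_{X_n}$ rather than $N_{X_n}$, so the vanishing is needed for $\mathrm{End}(E)\otimes\mathcal{O}_X(Z)$ twisted by $\mathcal{O}_X(-X_n)$ and $\mathcal{O}_X(-2X_n)$, with $n_0$ independent of $X_n$; the only caveat is that $\mathcal{O}_X(-Z)\big\vert_{X_n}=\mathcal{O}_{X_n}(-Z_n)$ needs $Z_n$ to be the intersection divisor $Z\big\vert_{X_n}$, and if $Z_n$ is meant as the reduced intersection you should take $X_n$ general (Bertini) for the forward direction and, for the converse, first compose a $(W_n,j_n)$-connection with the dual of the inclusion $TX_n\otimes\mathcal{O}_X(-Z)\big\vert_{X_n}\subset W_n$ before invoking injectivity.
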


\begin{remark}
Theorem \ref{thm:main} and Theorem \ref{thm:red} provide restriction criteria for the existence of Lie algebroid connections in higher dimensions under natural assumptions on the anchor map. These results extend Atiyah’s classical theorem to a broad class of Lie algebroids (e.g. Atiyah algebroid associated with a vector bundle), including those arising from logarithmic structures. Establishing analogous criteria for arbitrary
Lie algebroids or foliations remains an important open problem.
\end{remark}

\section*{Acknowledgements}

We thank the referee for helpful comments.
I.B. is partially supported by a J. C. Bose Fellowship (JBR/2023/000003).
A.S. is partially supported by ANRF/ARGM/2025/000670/MTR.
A.S. would like to thank Shiv Nadar University for their hospitality during the conference ``An International Conference on Algebraic and Analytic Geometry" from 1st to 5th December 2025, where the work was initiated.

\end{document}